\documentclass[11pt]{amsart}
\usepackage{amsfonts}
\usepackage{amsthm}
\usepackage{amsmath}
\usepackage{amssymb}
\usepackage{enumerate}
\numberwithin{equation}{section}

\newtheorem{theorem}{Theorem}[section]

\newcommand{\mymod}[2]{#1~\text{mod}~#2}

\title[Generalised Function Field Ramanujan sums]{Generalisations of Ramanujan sums for Polynomial rings over finite fields}

\author{J. C. Andrade}
\address{Department of Mathematics, University of Exeter, Exeter, EX4 4QF, United Kingdom}
\email{j.c.andrade@exeter.ac.uk}

\author{J. R. P. Hanslope}
\email{jrph202@exeter.ac.uk}

\subjclass[2010]{Primary 11N37; Secondary 11N56, 11T55}
\keywords{divisor functions, Euler's totient function, unitary divisors, polynomials, finite fields}

\begin{document}

\begin{abstract}
In this paper, we consider a general form of the analogue of Ramanujan's sum in the ring of polynomials over a finite field. We first prove some multiplicative properties of such functions before considering their finite Fourier series and some specific examples. In the end we also prove a result about the Dirichlet series of such functions. 
\end{abstract}

\maketitle

\section{Introduction}
Anderson and Apostol in \cite{anderson_apostol_1} studied some multiplicative properties of functions $F$ and $S$ expressible as $F(k)=\sum_{d \mid k }f(d)h(k/d)$ or the more general $S(n;k)=\sum_{d \mid (n,k)}f(d)h(k/d)$ where $f$ and $h$ are arithmetical functions. In their paper they prove several results involving multiplicative properties of $S$ and $F$ and the relationship between these two functions. In the same paper they also consider some particular examples of such functions, including the Ramanujan sum

$$c_k(n)= \sum_{\substack{\mymod{m}{k}\\(m,k)=1}}\exp(2 \pi i nm/k) = \sum_{d \mid (n,k)} d \mu(k/d),$$ 
and then they compute the Fourier series for these functions. Finally, they establish formulas for the Dirichlet series involving the function $S$ and the Ramanujan sum. 

The main aim of this paper is to establish function field analogues of the results found by Anderson and Apostol. It is important to note that we need to extend and develop in some extension the Ramanujan sums in the context of function fields which is not a trivial matter. We also present several examples for the Fourier series expansion of some arithmetic functions in the context of polynomials over finite fields (for more details see Section 3.2).

\section{Multiplicative Properties}

Before we state and prove the main results of this paper, we need to introduce some notation. Let $ A = \mathbb{F}_q[X] $ be the ring of polynomials over the finite field with $q$ elements, where $q = p^t$ for some prime number $ p $ and positive integer $ t $. Let an arithmetical function be a complex valued function defined on the set $ \{ f \in A : f \text{ is monic}\}$ and let such a function be called \textit{multiplicative} if, for coprime monic polynomials $f$ and $g$, $ G(fg)=G(f)G(g).$ We say $G$ is \textit{completely multiplicative} if the previous identity holds for all monic polynomials $f,g$. We also define $G(1)=1$ for any multiplicative or completely multiplicative function $G$. 

\bigskip

As in the Anderson and Apostol paper \cite{anderson_apostol_1}, we will be considering functions of the form 

\begin{equation} \label{defn:S}
S(h;f)=\sum\limits_{g \mid (h,f)} G(g) H (f/g)
\end{equation} 
where $h$ and $f$ are monic polynomials in $A$ and $G$ and $H$ are arithmetical functions. Our first result is the following.

\begin{theorem} \label{thm:S_multi}
Suppose $S$ is defined as in (\ref{defn:S}), $G,H$ are multiplicative functions and $h_1, h_2, f_1, f_2 $ monic polynomials in $ A $. If 

\begin{equation}\label{assumptions:thm1} (h_1, h_2) = (f_1, f_2 ) = (h_1, f_2) = (h_2, f_1) = 1, 
\end{equation} then 

$$S(h_1 h_2;f_1 f_2)=S(h_1; f_1)S(h_2;f_2). $$
\end{theorem}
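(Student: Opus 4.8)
The plan is to reduce the sum over divisors of $(h_1h_2,f_1f_2)$ to a product of two independent sums, using only unique factorisation in $A=\mathbb{F}_q[X]$ together with the definition of multiplicativity. The structural fact I would establish first is that the hypotheses (\ref{assumptions:thm1}) force
$$(h_1 h_2,\, f_1 f_2) = (h_1,f_1)\,(h_2,f_2),$$
and moreover that the two monic polynomials $(h_1,f_1)$ and $(h_2,f_2)$ on the right are themselves coprime. The coprimality is immediate, since $(h_1,f_1)\mid h_1$, $(h_2,f_2)\mid h_2$ and $(h_1,h_2)=1$. For the identity I would argue prime by prime: if a monic irreducible $P$ divides $(h_1h_2,f_1f_2)$, then $P$ divides one of $h_1,h_2$, say $h_1$, and then $(h_1,h_2)=1$ gives $P\nmid h_2$; likewise $P$ divides $f_1$ or $f_2$, but $P\mid f_2$ would contradict $(h_1,f_2)=1$, so $P\mid f_1$ (the case $P\mid h_2$ is symmetric and uses $(h_2,f_1)=1$). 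Comparing the exponent of $P$ on both sides, and using $v_P(h_2)=v_P(f_2)=0$, one gets $\min(v_P(h_1h_2),v_P(f_1f_2))=\min(v_P(h_1),v_P(f_1))$, which is exactly the exponent of $P$ in $(h_1,f_1)(h_2,f_2)$.

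Next I would invoke the standard consequence of unique factorisation that every monic divisor $g$ of a product $m_1 m_2$ of coprime monic polynomials factors uniquely as $g=g_1 g_2$ with $g_i\mid m_i$ monic, and that $g\mapsto (g_1,g_2)$ is a bijection from the monic divisors of $m_1m_2$ onto pairs of monic divisors of $m_1$ and $m_2$. Applying this with $m_i=(h_i,f_i)$, the index set of the sum defining $S(h_1h_2;f_1f_2)$ is in bijection with pairs $(g_1,g_2)$ where $g_1\mid (h_1,f_1)$ and $g_2\mid (h_2,f_2)$. For such a decomposition we have $f_1f_2/g=(f_1/g_1)(f_2/g_2)$; since $g_1\mid f_1$, $g_2\mid f_2$ and $(f_1,f_2)=1$, the polynomials $g_1,g_2$ are coprime and so are $f_1/g_1,f_2/g_2$. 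Multiplicativity of $G$ and $H$ then gives $G(g)=G(g_1)G(g_2)$ and $H(f_1f_2/g)=H(f_1/g_1)H(f_2/g_2)$.

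Substituting these factorisations into (\ref{defn:S}) and separating the resulting double sum,
$$S(h_1 h_2; f_1 f_2)=\sum_{g_1\mid (h_1,f_1)}\ \sum_{g_2\mid (h_2,f_2)} G(g_1)H(f_1/g_1)\,G(g_2)H(f_2/g_2)=S(h_1;f_1)\,S(h_2;f_2),$$
which is the assertion. I expect the only genuine subtlety to be the first step, namely verifying $(h_1h_2,f_1f_2)=(h_1,f_1)(h_2,f_2)$ and the coprimality of its two factors; everything afterwards is a formal reindexing combined with the definition of multiplicativity. It is also worth noting that all four of the pairwise-coprimality hypotheses in (\ref{assumptions:thm1}) are used: $(h_1,h_2)=1$, $(h_1,f_2)=1$, and $(h_2,f_1)=1$ in the prime-by-prime argument of step one, and $(f_1,f_2)=1$ in the splitting of $f_1f_2/g$ in step two.
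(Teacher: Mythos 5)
Your proposal is correct and follows essentially the same route as the paper: reduce $(h_1h_2,f_1f_2)$ to $(h_1,f_1)(h_2,f_2)$, note these two factors are coprime, split each divisor $g$ as $g_1g_2$, and use multiplicativity of $G$ and $H$ together with $(f_1,f_2)=1$ to factor the sum. The only difference is that you spell out the prime-by-prime verification of the gcd identity, which the paper merely asserts as easily checked.
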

This can be seen as the function field analogue of Theorem 1 of \cite{anderson_apostol_1}. Before we can prove this theorem we need the following result. If the conditions  (\ref{assumptions:thm1}) hold, then 

$$ (h_1 h_2, f_1 f_2) = (h_1, f_1) (h_2, f_2). $$ 
We can easily verify this by first showing that the gcd function is multiplicative in one variable. 

\begin{proof}[Proof of Theorem 2.1]
Suppose that the assumptions in (\ref{assumptions:thm1}) hold. Let $a_1 = (h_1,f_1)$ and $a_2 = (h_2,f_2)$, then 

\begin{align*}
S(h_1 h_2, f_1 f_2) &= \sum \limits_{g \mid (h_1 h_2, f_1 f_2)} G(g)H\left(\frac{f_1 f_2}{g}\right)=\sum \limits_{g\mid a_1 a_2}G(g)H\left(\frac{f_1 f_2}{g}\right)
\end{align*}
Note that the conditions in (\ref{assumptions:thm1}) imply that $(a_1, a_2) = 1$ so any divisor of $a_1 a_2$ has the form $g_1 g_2$ where $g_1 \mid a_1 $, $g_2 \mid a_2 $ and $(g_1, g_2)=1$. Therefore (using the fact that $G$ and $H$ are multiplicative and $(f_1,f_2)=1$) 

\begin{align*}
S(h_1 h_2, f_1 f_2) &= \sum \limits_{g_1 g_2 \mid a_1 a_2} G(g_1 g_2) H\left(\frac{f_1 f_2}{g_1 g_2}\right) \\
&=\sum \limits_{g_1 \mid (h_1, f_1)} G(g_1)H\left(\frac{f_1}{g_1}\right)\sum \limits_{g_2 \mid (h_2, f_2)} G(g_2)H\left(\frac{f_2}{g_2}\right) \\
&= S(h_1;f_1)S(h_2;f_2).
\end{align*}
\end{proof}

Now let us also consider functions of the form

\begin{equation*}
F(f) = \sum \limits_{g \mid f} G(g)H\left(\frac{f}{g}\right)
\end{equation*} 
which is simply a particular case of (\ref{defn:S}) in which $f \mid h$. For the next theorem, we will need the M\"obius function $\mu$, defined as $\mu(f)=0$ if $f$ is not square free and $\mu(f)=(-1)^t$ if $f$ is a constant times a product of $t$ distinct monic irreducibles; note that $\mu(1)=1$. We now state and prove our next main result.

\begin{theorem}
\label{thm:2.2.2}
Let $H(h)=J(h)\mu(h)$ where $J(h)$ is multiplicative. Suppose also that $G(h)$ is completely multiplicative and that for any monic irreducible $P$, $G(P) \neq 0$ and $G(P) \neq J(P)$. Then, if $S(h;f)$ is defined as in (\ref{defn:S}), we can express $S$ in terms of $F$ as

\begin{equation} \label{thm:2}
S(h;f)=\left.J\left(\frac{f}{(h,f)}\right)\mu\left(\frac{f}{(h,f)}\right)F(f)\middle/F\left(\frac{f}{(h,f)}\right).\right.
\end{equation}
\end{theorem}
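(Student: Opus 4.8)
The plan is to reduce everything to a local computation at each monic irreducible, exploiting the multiplicative machinery already established. First I would observe that both sides of (\ref{thm:2}) are, as functions of the pair $(h,f)$, built from multiplicative pieces: by Theorem~\ref{thm:S_multi} the left side $S(h;f)$ is multiplicative in the appropriate sense, and $F$ is multiplicative since it is the Dirichlet convolution of the multiplicative functions $G$ and $H=J\mu$; the function $f\mapsto f/(h,f)$ also respects the relevant coprime factorisations. So it suffices to verify the identity when $f=P^a$ and $h=P^b$ are powers of a single monic irreducible $P$ (and to check the trivial cases where $P\nmid f$, where both sides reduce to $F(f)/F(f)=1$ after noting $(h,f)$ picks up no factor of $P$). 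The hypotheses $G(P)\neq 0$ and $G(P)\neq J(P)$ are exactly what guarantee the denominators $F(f/(h,f))$ appearing are nonzero, so the quotients make sense.

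Next I would compute $F(P^a)$ explicitly. Since $H=J\mu$ vanishes on $P^2, P^3,\dots$, the convolution collapses to just two terms:
\begin{equation*}
F(P^a)=\sum_{j=0}^{a}G(P^j)H(P^{a-j})=G(P^a)H(1)+G(P^{a-1})H(P)=G(P)^a + G(P)^{a-1}J(P)\mu(P),
\end{equation*}
using complete multiplicativity of $G$; since $\mu(P)=-1$ this is $G(P)^{a-1}\bigl(G(P)-J(P)\bigr)$ for $a\geq 1$, and $F(1)=1$. Similarly I would compute the local factor of $S(P^b;P^a)$: writing $m=\min(a,b)$ so that $(P^b,P^a)=P^m$, the sum is $\sum_{j=0}^{m}G(P^j)H(P^{a-j})$, which by the same vanishing of $H$ on higher prime powers equals $G(P)^a+G(P)^{a-1}J(P)\mu(P)=F(P^a)$ when $m\geq 1$, and equals $H(P^a)=J(P^a)\mu(P^a)$ when $m=0$.

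Then I would just match the two cases against the claimed right-hand side. When $m=\min(a,b)\geq 1$ we have $f/(h,f)=P^{a-m}$; if $a\leq b$ this is $P^0=1$, so the right side is $J(1)\mu(1)F(P^a)/F(1)=F(P^a)$, agreeing with $S=F(P^a)$; if $b<a$ then $f/(h,f)=P^{a-b}$ with $a-b\geq 1$, and one checks $J(P^{a-b})\mu(P^{a-b})F(P^a)/F(P^{a-b})=J(P)\mu(P)\cdot 0$ unless $a-b=1$ — more carefully, $\mu(P^{a-b})=0$ for $a-b\geq 2$, while $S(P^b;P^a)=G(P)^{a-1}(G(P)-J(P))$; so I need $a-b\geq 2$ to force both sides to behave consistently, which is where the argument needs care. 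When $m=0$, i.e.\ $\min(a,b)=0$ so $h$ contributes no $P$ (hence in the global picture $b=0$), we have $f/(h,f)=P^a$ and the right side is $J(P^a)\mu(P^a)F(P^a)/F(P^a)=J(P^a)\mu(P^a)$, matching $S=H(P^a)$.

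The main obstacle I anticipate is the bookkeeping in the case $b<a$ with $a-b\geq 2$: naively the right-hand side contains $\mu(P^{a-b})=0$, which would force $S(h;f)=0$, yet the direct computation gives $S(P^b;P^a)=G(P)^{a-1}(G(P)-J(P))\neq 0$. This suggests that in this regime $H(P^{a-j})$ must vanish for all $j\leq b$ as well, i.e.\ that the surviving terms in $S$ actually require $a-j\in\{0,1\}$, hence $j\in\{a-1,a\}$, which lie in the range $0\le j\le b$ only if $b\ge a-1$; so in fact for $b\le a-2$ every term of $S(P^b;P^a)$ has $a-j\ge 2$ and $S(P^b;P^a)=0$, consistent with the right side. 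I would therefore need to be scrupulous about exactly which $j$ survive as a function of $b$ versus $a$, and organise the case split as $b\ge a$, $b=a-1$, and $b\le a-2$; once the local identity is pinned down in each of these three cases, the global statement follows immediately by multiplicativity.
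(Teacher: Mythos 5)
Your plan is essentially the paper's own proof: reduce to $f=P^{a}$, $h=P^{b}$ via Theorem~\ref{thm:S_multi} and the multiplicativity of $J$, $\mu$, $F$, evaluate $F(P^{a})=G(P)^{a-1}\bigl(G(P)-J(P)\bigr)$ using the vanishing of $\mu$ on higher prime powers, and split according to whether $a-\min(a,b)$ is $0$, $1$, or at least $2$; your self-correction in the last paragraph (that for $b\le a-2$ every surviving index would need $a-j\le 1$, so $S(P^{b};P^{a})=0$, matching the vanishing of $\mu(P^{a-b})$ on the right) is exactly the paper's case (i), and it repairs the erroneous intermediate claim that $S(P^{b};P^{a})=F(P^{a})$ whenever $\min(a,b)\ge 1$. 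The one case you leave ``to be pinned down,'' $b=a-1$, closes in one line with the tools you already have: the only surviving term gives $S(P^{a-1};P^{a})=G(P)^{a-1}J(P)\mu(P)$, while the right-hand side is $J(P)\mu(P)F(P^{a})/F(P)=J(P)\mu(P)G(P)^{a-1}$ since the factor $G(P)-J(P)$ cancels (this is where $G(P)\ne J(P)$ and $G(P)\ne 0$ are used), which is the paper's case (iii).
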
 

\begin{proof}
First, let the expression on the right hand side of (\ref{thm:2}) be $R(h;f)$. Then, by the multiplicativity of $J,~\mu$ and $F$ we have that if the assumptions in (\ref{assumptions:thm1}) hold then 

$$R(h_1 h_2; f_1 f_2)=R(h_1;f_1) R(h_2 ; f_2).$$ 
Because of this and the result of Theorem \ref{thm:S_multi} we need only show that (\ref{thm:2}) holds in the case that $f=P^\alpha$ and $h=P^\beta$ for some monic irreducible $P$. Under these assumptions and that $\delta = \min\{\alpha, \beta\}$ (so $(k,h) = P^\delta$), we can minipulate the left hand side of (\ref{thm:2}) as

\begin{align}
S(P^\beta;P^\alpha)&=\sum\limits_{g\mid P^\delta}G(g)H(f/g) = \sum \limits_{i=0}^\delta G(P^i) J(P^{\alpha - i}) \mu (P^{\alpha - i}) \label{thm:2.1}
\end{align}
Whereas the right hand side of (\ref{thm:2}) becomes 

\begin{equation}
\left.J(P^{\alpha-\delta})\mu(P^{\alpha-\delta})F(P^\alpha)\middle/F(P^{\alpha - \delta} )\right. \label{thm:2.2}
\end{equation}
We will consider three cases: (i) $\alpha - \delta \geq 2 $, (ii)$\alpha - \delta = 0 $ and (iii) $\alpha - \delta = 1$. 
In (i) we have that $\mu(P^{\alpha-\delta}) = 0 $ so (\ref{thm:2.2}) vanishes. Also note that $\mu(P^{\alpha-i})=0 $ for all $0 \leq i \leq \delta $ so each term in (\ref{thm:2.1}) also vanishes. Now, in (ii), (\ref{thm:2.2}) becomes

$$ \left. J(1)\mu(1) F(P^\alpha) \middle/ F(1)=F(P^\alpha)\right.$$ 
On the other hand, (\ref{thm:2.1}) becomes 

\begin{align*}
\sum\limits_{i=0}^\alpha G(P^i) J(P^{\alpha-i})\mu(P^{\alpha-i})=\sum\limits_{g \mid P^\alpha} G(g) H(P^\alpha/g) = F(P^\alpha).
\end{align*}
So the result holds in case (ii). Finally, in case (iii), (\ref{thm:2.2}) becomes 

$$ J(P) \mu(P) F(P^\alpha)/F(P).$$
And the equation (\ref{thm:2.1}) becomes 
$$ \sum \limits_{i=0}^{\alpha-1} G(P^i)J(P^{\alpha-i}) \mu(P^{\alpha-i}). $$ 
We have that $\mu(P^{\alpha - i})$ will be zero unless $\alpha -i \leq 1 $, i.e.\ $i \geq \alpha - 1$. Thus it is only the very last term, $G(P^{\alpha-1})J(P)\mu(P)$, that is not zero. If $\alpha=1$ then we have $J(P)\mu(P)$ and $G(1)J(P)\mu(P)$ which are equal. Otherwise, if $\alpha>1$, then we have

\begin{align*}
F(P^\alpha)&=\sum\limits_{g \mid P^\alpha} G(g)H(P^\alpha/g) = \sum \limits_{i=0}^\alpha G(P^i)J(P^{\alpha-1})\mu(P^{\alpha-i}) \\
&= G(P^\alpha)+G(P^{\alpha-1})J(P) \\
&= G(P^{\alpha-1})(G(P)+J(P)),
\end{align*} 
since $G$ is completely multiplicative. Hence $F(P^\alpha)/F(P)=G(P^{\alpha-1})$. So then both (\ref{thm:2.1}) and (\ref{thm:2.2}) agree in case (iii) and therefore all three cases. 
\end{proof}

We now state and prove a third theorem, which is a multiplicative property of $F$.

\begin{theorem}
Suppose $H(h)$ and $G(h)$ satisfy the conditions of Theorem \ref{thm:2.2.2}. Then the function 

$$F(h) = \sum \limits_{g \mid h} G(g)J(h/g) \mu (h/g)$$ satisfies 

$$F(h h')=F(h)F(h')\frac{G((h,h'))}{F((h,h'))} $$ for any $h' \in A$.
\end{theorem}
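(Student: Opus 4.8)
The plan is to reduce the asserted identity to a local statement at each monic irreducible and then to feed in the prime-power evaluation of $F$ already obtained inside the proof of Theorem \ref{thm:2.2.2}. First I would observe that $F = G \ast (J\mu)$ is a Dirichlet convolution of multiplicative functions, hence multiplicative with $F(1)=1$; and since $G$ and $\mu$ are multiplicative too, both $G((h,h'))$ and $F((h,h'))$ factor over the monic irreducibles. Writing $h=\prod_{P}P^{\alpha_{P}}$ and $h'=\prod_{P}P^{\beta_{P}}$ (product over monic irreducibles, almost all exponents zero), we have $hh'=\prod_{P}P^{\alpha_{P}+\beta_{P}}$ and $(h,h')=\prod_{P}P^{\delta_{P}}$ with $\delta_{P}=\min\{\alpha_{P},\beta_{P}\}$, so both sides of the claimed formula become products over $P$. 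Thus it suffices to prove, for each monic irreducible $P$ and all integers $\alpha,\beta\geq 0$ with $\delta:=\min\{\alpha,\beta\}$, the single identity
\begin{equation*}
F(P^{\alpha+\beta})\, F(P^{\delta}) = F(P^{\alpha})\, F(P^{\beta})\, G(P^{\delta});
\end{equation*}
observe that a $P$ dividing only one of $h,h'$ has $\delta=0$ and gives the trivial instance $F(P^{\alpha})\cdot 1 = F(P^{\alpha})\cdot F(1)\cdot G(1)$, consistent with dropping that factor.

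Next I would recall from the proof of Theorem \ref{thm:2.2.2} what $F$ does on prime powers: $F(1)=1$, and for every $\alpha\geq 1$ one has $F(P^{\alpha}) = G(P^{\alpha}) + G(P^{\alpha-1})J(P)\mu(P) = G(P^{\alpha-1})\,F(P)$, where $F(P)=G(P)+J(P)\mu(P)=G(P)-J(P)$ since $\mu(P)=-1$ (this computation uses that $G$ is completely multiplicative and that $\mu(P^{j})=0$ for $j\geq 2$). By the hypotheses $G(P)\neq 0$ and $G(P)\neq J(P)$ we get $F(P)\neq 0$ and $G(P^{\alpha-1})\neq 0$, hence $F(P^{\delta})\neq 0$ for all $\delta\geq 0$, so dividing by $F((h,h'))$ is legitimate. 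The local identity is then immediate: for $\delta=0$ it is the trivial instance above; for $\delta\geq 1$, say $\alpha\leq\beta$, substituting $F(P^{\alpha+\beta})=G(P^{\alpha+\beta-1})F(P)$, $F(P^{\alpha})=G(P^{\alpha-1})F(P)$ and $F(P^{\beta})=G(P^{\beta-1})F(P)$ turns both sides into $G(P^{\alpha-1})\,G(P^{\alpha+\beta-1})\,F(P)^{2}$, the right-hand side using $G(P^{\alpha})G(P^{\beta-1})=G(P^{\alpha+\beta-1})$ by complete multiplicativity.

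Finally I would reassemble: multiplying the local identities over all $P$ and using multiplicativity of $F$, $G$ and $\mu$ to recognise the four products as $F(hh')$, $F(h)F(h')$, $G((h,h'))$ and $F((h,h'))$ gives $F(hh')\,F((h,h'))=F(h)\,F(h')\,G((h,h'))$, which rearranges to the statement of the theorem. The computation is routine; the only points requiring attention are the non-vanishing of $F$ on the prime-power divisors of $(h,h')$ — precisely where the two hypotheses on $G(P)$ are used, so that the quotient $G((h,h'))/F((h,h'))$ is meaningful — and the bookkeeping that irreducibles dividing only one of $h,h'$ cancel. Neither is a genuine obstacle; a derivation by specialising Theorem \ref{thm:2.2.2} instead is possible but longer.
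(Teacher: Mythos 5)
Your proof is correct, and it takes a slightly different route from the paper's. The paper never localises the identity: instead it derives the closed-form Euler product $F(h)=G(h)\prod_{P\mid h}\bigl(1-\tfrac{J(P)}{G(P)}\bigr)$ directly from the convolution (pulling $G(h)$ out by complete multiplicativity and recognising $\sum_{g\mid h}\mu(g)J(g)/G(g)$ as a product over the irreducible divisors of $h$), and then computes the ratio $F(hh')/\bigl(F(h)F(h')\bigr)$ in one global step, the primes common to $h$ and $h'$ surviving as $\prod_{P\mid(h,h')}\bigl(1-\tfrac{J(P)}{G(P)}\bigr)^{-1}=G((h,h'))/F((h,h'))$. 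You instead invoke multiplicativity of $F$ to reduce the whole identity to the prime-power statement $F(P^{\alpha+\beta})F(P^{\delta})=F(P^{\alpha})F(P^{\beta})G(P^{\delta})$ and settle it with the evaluation $F(P^{\alpha})=G(P^{\alpha-1})F(P)$ recycled from the proof of Theorem \ref{thm:2.2.2}; since $G(P^{\alpha-1})F(P)=G(P^{\alpha})\bigl(1-\tfrac{J(P)}{G(P)}\bigr)$, the two arguments carry the same content, just organised locally versus globally. Two small points in your favour: you keep track of $\mu(P)=-1$, so $F(P)=G(P)-J(P)$ (the paper's prime-power computation silently drops this sign, though its conclusion $F(P^{\alpha})/F(P)=G(P^{\alpha-1})$ is unaffected), and you make explicit the non-vanishing $F(P^{\delta})\neq 0$ guaranteed by $G(P)\neq 0$, $G(P)\neq J(P)$, which the paper uses only implicitly when dividing by $F((h,h'))$; note also that the paper's final displayed rearrangement contains an obvious typo ($G((h,h'))/G((h,h'))$) which your statement of the conclusion avoids.
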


\begin{proof}
Since $G$ is completely multiplicative

\begin{align*}
F(h) = \sum \limits_{g \mid h} G(h/g) J(g) \mu(g) = G(h) \sum \limits_{g \mid h} \mu(g) \frac{J(g)}{G (g)}.
\end{align*}
Note that we can write this sum as an Euler product where the product runs over all monic irreducible divisors of $h$ 

$$ \sum \limits_{g \mid h} \mu(g) \frac{J(g)}{G (g)} = \prod \limits_{P \mid h} \left(1-\frac{J(P)}{G(P)}\right).$$ 
Subbing this into the expression above we get 

$$ F(h) = G(h) \prod \limits_{P \mid h} \left(1-\frac{J(P)}{G(P)}\right),$$ and then

\begin{align*} \frac{F(hh')}{F(h)F(h')} &=\frac{G(hh')}{G(h)G(h')}\cdot \frac{\prod \limits_{P \mid hh'} \left(1-\frac{J(P)}{G(P)}\right)}{\prod \limits_{P \mid h} \left(1-\frac{J(P)}{G(P)}\right)\prod \limits_{P \mid h'} \left(1-\frac{J(P)}{G(P)}\right)} \\
&=\frac{1}{\prod \limits_{P \mid (h,h')} \left(1-\frac{J(P)}{G(P)}\right)} \\
&= \frac{G((h,h'))}{F((h,h'))}.
\end{align*} 
Rearranging gives the required result that

$$F(hh') = F(h)F(h')\frac{G((h,h'))}{G((h,h'))}.$$
\end{proof}

\section{Sums and finite Fourier Series}
In this section we first see that a particular example of functions of the type (\ref{defn:S}) is the function field analogue of the Ramanujan sum. We then look at expressing functions of the form (\ref{defn:S}) as Fourier series and consider some particular examples. 

\subsection{Stating and proving the theorems}
We first need to establish an analogue of the exponential function in function fields. Carlitz did some work on the polynomial Ramanujan sum in \cite{carlitz_1} so we follow \S 2 of that paper. Recall first that $q=p^t$ for some prime number $p$. Let $f,h \in \mathbb{F}_q[X]$ be monic polynomials. Suppose first that $t=1$ so then $q=p$. Write 

$$ h = \alpha_1 X^{r-1} + \ldots + \alpha_r \mod f$$ where $r = \deg(f)$. Then put 

$$\varepsilon(h,f)=\exp(2 \pi i \alpha_1 /p).$$ Now suppose that $t>1$. Let $GF(p^{t})$ be the Galois field of order $p^{t}$, defined by 

$$ \Psi(\theta)=\theta^t+c_1\theta^{t-1}+\ldots+c_t=0$$ were $\Psi(X)$ is an irreducible polynomial of $\mathbb{F}_p[X]$. Then let 

$$\alpha_1=\alpha_1(\theta) = a_1 \theta^{t-1}+\ldots+a_t,$$
where $\alpha_{1}$ is the leading coefficient of $h\pmod f$.
 
In this case, define 

$$\varepsilon(h,f) = \exp(2 \pi i a_1/p).$$ 
Now we are in a position to state and prove the analogue of equation (3) from \cite{anderson_apostol_1}.

\begin{theorem} \label{theorem:raman_analogue}
Let $\varepsilon(h,f)$ be defined as above. Then we define $\eta$ by 

$$ \eta (h,f) = \sum_{\substack{g~\text{mod}~f \\ (g,f)=1}} \varepsilon(hg,f). $$ 
Then we have that 

$$ \eta(h,f) = \sum_{d \mid (h,f)} |d| \mu\left(\frac{f}{d}\right).$$
\end{theorem}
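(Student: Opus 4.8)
The plan is to mimic the classical proof of the Ramanujan sum identity, exploiting the orthogonality-type relations that the function field exponential $\varepsilon$ inherits from the additive characters of $\mathbb{F}_q$. First I would record the basic properties of $\varepsilon(h,f)$: it is additive in the first argument, i.e. $\varepsilon(h_1+h_2,f)=\varepsilon(h_1,f)\varepsilon(h_2,f)$, it depends only on $h \bmod f$, and it satisfies the fundamental summation relation
\[
\sum_{h \bmod f} \varepsilon(h,f) = \begin{cases} |f| & \text{if } f \text{ is constant (i.e. } \deg f = 0),\\ 0 & \text{otherwise.}\end{cases}
\]
More useful is the slightly more refined statement: for a monic divisor $d \mid f$,
\[
\sum_{\substack{h \bmod f}} \varepsilon(h,f)\,[\,d \mid h\,] = \begin{cases} |f/d| & \text{if } f/d \text{ divides the relevant modulus appropriately},\\ 0 & \text{otherwise,}\end{cases}
\]
which one proves by reindexing $h = d h'$ and observing $\varepsilon(dh',f)=\varepsilon(h',f/d)$. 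These facts come straight from Carlitz's \S2, so I would cite them rather than reprove them.

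Next I would introduce the standard auxiliary sum without the coprimality constraint,
\[
\widetilde\eta(h,f) = \sum_{g \bmod f} \varepsilon(hg,f),
\]
and compute it directly from the summation relation above: it equals $|f|$ when $f \mid h$ and $0$ otherwise. Then the Möbius-over-divisors sieve gives
\[
\eta(h,f) = \sum_{\substack{g \bmod f\\(g,f)=1}} \varepsilon(hg,f) = \sum_{e \mid f} \mu(e) \sum_{\substack{g \bmod f\\ e \mid g}} \varepsilon(hg,f) = \sum_{e \mid f} \mu(e)\,\widetilde\eta\!\left(h, \tfrac{f}{e}\right)\cdot(\text{normalising factor}),
\]
where the inner sum is handled by writing $g = e g'$ with $g'$ running mod $f/e$ and using $\varepsilon(h e g', f) = \varepsilon(h g', f/e)$. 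This collapses the double sum to $\sum_{e} \mu(e)\,\widetilde\eta(h, f/e)$, and since $\widetilde\eta(h,f/e)$ is $|f/e|$ exactly when $(f/e) \mid h$ — equivalently when $e$ is a multiple of $f/(h,f)$ among divisors of $f$ — substituting $e = (f/(h,f))\cdot d'$ and simplifying turns the sum into $\sum_{d \mid (h,f)} |d|\,\mu(f/d)$ after relabelling. I would also supply the clean multiplicative shortcut: both sides are multiplicative in $f$ for fixed $h$ (by Theorem \ref{thm:S_multi} applied to the right-hand side, and by a direct factorisation of the character sum on the left), so it suffices to check $f = P^k$, where the computation is a short finite geometric-type sum.

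The main obstacle I anticipate is not the combinatorics but pinning down precisely how $\varepsilon$ behaves under the substitution $g \mapsto eg$ and under passing from modulus $f$ to modulus $f/e$ — that is, verifying the identity $\varepsilon(h e g', f) = \varepsilon(h g', f/e)$ and the degenerate cases when $\deg(f/e) = 0$. The subtlety is that $\varepsilon$ is defined via the \emph{leading coefficient} of a reduction modulo $f$, with an extra layer ($\Psi$, the $a_i$) when $t > 1$, so one must check the leading-coefficient extraction commutes correctly with scaling and with the change of modulus; this is exactly the point where Carlitz's normalisation conventions matter and where a sign or an index shift could slip in. Once that compatibility is nailed down (again, essentially a citation to \cite{carlitz_1} \S2), the rest is bookkeeping, and I would present the $f = P^k$ reduction as the cleanest route to the final formula.
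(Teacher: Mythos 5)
Your proposal is correct and takes essentially the same route as the paper: both reduce the coprime sum to the unrestricted sum $\sum_{g \bmod f}\varepsilon(hg,f)$ (equal to $|f|$ if $f\mid h$ and $0$ otherwise, Carlitz's (2.6)) via a M\"obius-over-divisors argument that rests on Carlitz's homogeneity property (2.5), the paper phrasing the M\"obius step as inversion of $\delta_1(f)=\sum_{d\mid f}\delta_2(d)$ while you sieve the condition $(g,f)=1$ directly --- the same computation. The ``normalising factor'' you hedge on is just $1$, and your multiplicative shortcut is an optional extra, not needed once the sieve is in place.
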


Note that for $f \in A = \mathbb{F}_q[X]$ we have that $|f| = q^{\deg(f)}$.
Clearly the function $\eta(h,f)$ is the polynomial analogue of the Ramanujan sum and the result, the correct analogue of \cite[equation (3)]{anderson_apostol_1}.

\begin{proof}
Suppose we have some function $\delta$ such that $\delta(kg,kf)=\delta(g,f)$, where we are also using the assumption that $\delta(g,f)$ depends only on $g\pmod f$. Then every $\delta(g,f)$ such that $\deg(g) < \deg(f)$ may be expresed as $\delta(g',d)$ where $\deg(g')<\deg(d)$, $(g',d)=1$ and $d\mid f$. Then we have 

$$ \sum_{\mymod{g}{f}} \delta(g,f) = \sum_{d \mid f} \sum_{\substack{\mymod{g}{d} \\(g,d)=1}} \delta(g,d).$$ 
Letting $ \delta_1(f)=\sum\limits_{\mymod{g}{f}} \delta(g,f)$ 
and 
$\delta_2(f)=\sum\limits_{\substack{\mymod{g}{f}\\(f,g)=1}} \delta(g,f) $ gives 

$$\delta_1(f) = \sum_{d\mid f} \delta_2(d).$$ 
Using the M\"obius inversion formula gives 

$$\delta_2(f)=\sum_{d \mid f}  \mu\left(\frac{f}{d}\right) \delta_1(d).$$ 
So then we have 

$$\sum_{\substack{\mymod{g}{f}\\(f,g)=1}} \delta(g,f)=\sum_{d\mid f} \mu \left( \frac{f}{d} \right) \sum_{\mymod{g}{d}}\delta(g,d).$$
Now let $\delta(g,f)=\varepsilon(hg,f)$. 
From \cite[(2.5)]{carlitz_1} we see that this satisfies the required property of $\delta$. 
So then we have 

$$\sum_{\substack{\mymod{g}{f}\\(f,g)=1}} \varepsilon(hg,f)=\sum_{d\mid f} \mu \left( \frac{f}{d} \right) \sum_{\mymod{g}{d}}\varepsilon(hg,d).$$ 
From \cite[equation (2.6)]{carlitz_1} we have that 

\begin{equation} \label{equation:divisor_or_0} 
\sum_{\mymod{g}{f}}\varepsilon(hg,f)=
\begin{cases}
|f| & \text{if } f\mid h\\
0 & \text{if } f \nmid h.
\end{cases} 	
\end{equation} So we have
 
\begin{align*}
\eta(h,f) &= \sum_{d\mid f, d \mid h }  \mu \left(\frac{f}{d}\right)|d| \\
&= \sum_{d \mid(f,h)} |d| \mu \left(\frac{f}{d} \right).
\end{align*}
\end{proof}

We now look at the Fourier series of general functions of the form (\ref{defn:S}).

\begin{theorem}
For given arithmetic functions $G(h)$ and $H(h)$ we consider the function $S(h;f)$ as defined in (\ref{defn:S}). This function may be expressed as 

\begin{equation} \label{equation:S_sum_formula}
S(h;f)=\sum_{\mymod{g}{f}}a_f(g) \varepsilon(hg,f),
\end{equation} 
where the sum extends over any complete residue system modulo $f$, $\varepsilon$ is as defined above and the coefficients $a_f(g)$ are given by 

\begin{equation} \label{euqation:a_coeff}
a_f(g)=\frac{1}{|f|}\sum_{d \mid (f,g) } |d|H(d)G\left(\frac{f}{d}\right).
\end{equation} 
Also, if $G(h)$ is completely multiplicative and $r=(g,f)$, we have 

$$ a_f(g)=\frac{1}{|f|}G\left(\frac{f}{r}\right)\sum_{d \mid r} |d| H(d) G\left(\frac{r}{d}\right) $$
\end{theorem}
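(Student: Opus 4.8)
The plan is to verify the expansion (\ref{equation:S_sum_formula}) directly: substitute the claimed coefficients (\ref{euqation:a_coeff}) into the right-hand side, interchange the order of summation, and collapse the inner character sum using the orthogonality relation (\ref{equation:divisor_or_0}). As a preliminary remark I would observe that $S(h;f)$ depends only on the residue class of $h$ modulo $f$, since $(h,f)$ is unchanged when $h$ is reduced modulo $f$; this is what makes it sensible to look for a finite Fourier expansion in $h$ at all.

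Carrying this out: inserting (\ref{euqation:a_coeff}) into $\sum_{g~\text{mod}~f} a_f(g)\varepsilon(hg,f)$ gives a double sum over $g~\text{mod}~f$ and over $d \mid (f,g)$. Since $d\mid(f,g)$ is equivalent to $d\mid f$ together with $d\mid g$ (and divisibility by $d$ is well defined modulo $f$ once $d\mid f$), I would re-order this as a sum over $d\mid f$ of an inner sum over those $g~\text{mod}~f$ with $d\mid g$. Writing $g=dg'$, the variable $g'$ runs over a complete residue system modulo $f/d$, and the scaling property of $\varepsilon$ already used in the proof of Theorem~\ref{theorem:raman_analogue} (from \cite[(2.5)]{carlitz_1}), namely $\varepsilon(dm,dn)=\varepsilon(m,n)$, turns the inner sum into $\sum_{g'~\text{mod}~f/d}\varepsilon(hg',f/d)$. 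By (\ref{equation:divisor_or_0}) this equals $|f/d|$ when $(f/d)\mid h$ and $0$ otherwise.

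What remains is $\tfrac{1}{|f|}\sum_{d\mid f,\ (f/d)\mid h}|d|\,|f/d|\,H(d)G(f/d)$, and since $|d|\,|f/d|=|f|$ the prefactor cancels. Re-indexing by $e=f/d$ converts the constraint ``$d\mid f$ and $(f/d)\mid h$'' into ``$e\mid f$ and $e\mid h$'', i.e.\ $e\mid(h,f)$, so the sum becomes $\sum_{e\mid(h,f)}G(e)H(f/e)=S(h;f)$, which is exactly (\ref{defn:S}). For the final displayed identity, assume $G$ is completely multiplicative and set $r=(g,f)$. Every $d$ occurring in (\ref{euqation:a_coeff}) divides $r$, and since $d\mid r\mid f$ one has $f/d=(f/r)(r/d)$, whence $G(f/d)=G(f/r)G(r/d)$ by complete multiplicativity; pulling the factor $G(f/r)$ out of the sum yields the stated formula.

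The step I expect to require the most care is the inner character-sum computation: checking that $g\mapsto g/d$ genuinely puts the residues $g~\text{mod}~f$ with $d\mid g$ in bijection with a complete residue system modulo $f/d$, and applying the scaling identity for $\varepsilon$ in the correct form. Once that is pinned down, the rest is bookkeeping with the gcd conditions and the re-indexing $e=f/d$.
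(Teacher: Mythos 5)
Your proposal is correct and follows essentially the same route as the paper: substitute the claimed coefficients, interchange the sums, substitute $g=dg'$ and use the scaling property of $\varepsilon$ together with the orthogonality relation (\ref{equation:divisor_or_0}), then reindex by $e=f/d$ to recover $S(h;f)$, with the final identity following from complete multiplicativity of $G$ exactly as in the paper. Your extra attention to the bijection between residues divisible by $d$ modulo $f$ and residues modulo $f/d$ is a point the paper passes over quickly, but it is the same argument.
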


\begin{proof}
We work by first defining $a_f(g)$ as in (\ref{euqation:a_coeff}) and then deriving (\ref{equation:S_sum_formula}). Consider the sum 

\begin{align*}\sum_{\mymod{g}{h}}a_f(g)\varepsilon(hg,f) &=\sum_{\mymod{g}{f}}\frac{1}{|f|}\sum_{d\mid(f,g)}|d|H(d)G\left(\frac{f}{d}\right)\varepsilon (hg,f)\\
&= \frac{1}{|f|} \sum_{\mymod{g}{f}}\sum_{d\mid f,d \mid g} |d|H(d)G(f/d) \varepsilon(hg,f).
\end{align*}
Writing $g=g'd$ we get 

$$\frac{1}{|f|} \sum_{\mymod{g'd}{f}} \sum_{d \mid f} |d|H(d)G\left(\frac{f}{d}\right)\varepsilon(hg'd,f)$$ 
and then as $d\mid f$ we can rewrite this as 

$$\sum_{d \mid f} H(d) \frac{G(f/d)}{|f/d|} \sum_{\mymod{g'}{f/d}}\varepsilon(hg',f/d).$$ 
Now, from (\ref{equation:divisor_or_0}), we have that the sum on the right is either $|f/d|$ or $0$ depending on whether $(f/d)\mid h$ or not. Thus we have 

$$ \sum_{d \mid f, (f/d) \mid h} H(d) G(f/d)=\sum_{d\mid (f,h)}G(d)H(f/d)=S(h;f).$$ 
Note that, as $(g+f, f)=(g,f)$, we have that $a_f(g+f)=a_f(g)$ and so the particular residue system modulo $f$ is not important. Finally, suppose that $G(h)$ is completely multiplicative, then letting $r=(g,f)$, we have 

$$G\left(\frac{f}{d}\right)=G\left(\frac{f}{r}\frac{r}{d}\right)=G\left(\frac{f}{r}\right)G\left(\frac{r}{d}\right)$$ and the final result holds.
\end{proof}

\subsection{Some examples}
In this section we take $G(f)=|f|$ and evaluate the coefficients $a_f(g) $ for various functions $H(f)$. Note that the coefficients will be given by 

$$ a_f(g) = \sum_{d\mid(f,g)}H(d).$$
We first do $H(f) =\mu(f)$. Then we get
$a_f(g)=\sum_{d \mid (f,g)} \mu(d). $
Note that if $(f,g)=1$, the only divisor is $1$, $\mu(1)=1$ so in this case $a_f(g)=1$. If $(f,g)>1$, suppose $(f,g)=P_1^{\alpha_1}\ldots P_r^{\alpha_r}$ for monic irreducibles $P_i$. Then 

\begin{align*}
a_f(g) &= \mu(1)+\mu(P_1)+\ldots+\mu(P_r)+\mu(P_1 P_2)+\ldots+\mu(P_{r-1}P_r)+\ldots+\mu(P_1\ldots P_r) \\
&= \binom{r}{0}+\binom{r}{1}(-1)+\binom{r}{2}(-1)^2+\ldots+\binom{r}{r}(-1)^r =0
\end{align*} Thus 

\begin{align*}
a_f(g)=
\begin{cases}
1 & \text{if $(f,g)=1$}\\
0 & \text{if $(f,g)>1$}
\end{cases}
\end{align*}
and so 

\begin{equation}
\sum_{\substack{\mymod{g}{f}\\(f,g)=1}} \varepsilon(hg,f)=\sum_{g \mid (h,f)}|g|\mu(f/g).
\end{equation} 
Note that this is just confirming the result found by Theorem \ref{theorem:raman_analogue}.\bigskip

Now take $H(h)=\Phi(h)$, where $\Phi(f)$ is the polynomial equivalent of the Euler totient function defined as the number of non-zero polynomials of degree less than $\deg(f)$ and coprime to $f$. Note the identity, for $h \in  A=\mathbb{F}_q[X]$ 

\begin{equation}\label{equ:phi_ident}
\sum_{d\mid h} \Phi(d)=|h|.
\end{equation} 
Thus $a_f(g)=|(f,g)|$ and so 

\begin{equation}
\sum_{g \mod f} |(f,g)| \varepsilon(hg,f)=\sum_{g\mid(h,f)}|g|\Phi(f/g).
\end{equation}

Now take $H(h)=\mu(h)/|h|$. From (\ref{equ:phi_ident}) we see that $\sum_{d \mid h} \Phi \left({h}/{d}\right)=|h|$ and then by the M\"obius inversion formula $\sum_{d \mid h} \mu(d){|h|}/{|d|}=\Phi(h)$. Finally, rearranging and letting $h=(f,g)$, we have 

$$a_f(g)=\frac{\Phi((f,g))}{|(f,g)|}.$$ 
Rearranging the equation for $S(h;f)$ we get 

\begin{equation}
|f|\sum_{\mymod{g}{f}} \frac{\Phi((f,g))}{|(f,g)|}\varepsilon(hg,f)=\sum_{g \mid (f,h)}|g|^2 \mu(f/g).
\end{equation}

Now let $H(f)=\frac{\mu^2(f)}{\Phi(f)}$. 
So $a_f(g)=\sum_{d \mid(f,g)} \frac{\mu^2(d)}{\Phi(d)}.$
Writing this sum as an Euler product and rearranging gives the following.

\begin{align*} a_f(g) &= 
 \prod_{P\mid (f,g)}\left(1+\frac{\mu^2(P)}{\Phi(p)}\right) = \prod_{P \mid (f,g)} \left(1-\frac{1}{|P|}\right)^{-1}   \\
&= \frac{|(f,g)|}{\Phi((f,g))} \qquad\text{by \cite[Proposition 2.4]{rosen_1}}
\end{align*}
Thus 

\begin{equation} \sum_{\mymod{g}{f}}\frac{|(f,g)|}{\Phi((f,g))}\varepsilon(hg,f)=\sum_{g \mid(h,f)}|g|\left.\mu^2\left(\frac{f}{g}\right)\middle/ \Phi\left(\frac{f}{g}\right)\right.
\end{equation}

Now we will let $H(f)$ be the polynomial Von Mangoldt function, $\Lambda$, which we define as 

\begin{align*}
\Lambda(f)=\begin{cases} \log_q|P| & \text{if $f=P^k$ for monic irreducible $P$ and positive integer $k$} \\ 0 & \text{otherwise} \end{cases}
\end{align*}
Suppose the decomposition into monic irreducibles of $(g,f)$ is $P_1^{\alpha_1}\ldots P_r^{\alpha_r}$ then 

\begin{align*}
a_f(g) &= \alpha_1 \log_q |P_1|+\alpha_2 \log_q P_2 + \ldots + \alpha_r \log_q P_r\\
&=\log_q |P_1^{\alpha_1}\ldots P_r^{\alpha_r}|=\log_q|(f,g)|.
\end{align*}
So 

\begin{equation}
\sum_{\mymod{g}{f}}\log_q|(f,g)|\varepsilon(hg,f)=\sum_{g\mid(h,f)}|g|\Lambda \left(\frac{f}{g}\right).
\end{equation}

We now let $H$ be the Liouville function, $\lambda$, defined by 

$$\lambda(f)=(-1)^{\Omega(f)}$$ where $\Omega(f)$ is the number of monic irreducible divisors of $f$ counted with multiplicity. Note that $\lambda$ is multiplicative: letting $f,g\in A$ be such that $(f,g)=1$, then $ \Omega(fg) = \Omega(f) + \Omega(g) $ and so 

$$\lambda(fg)=(-1)^{\Omega(fg)}=(-1)^{\Omega(f)+\Omega(g)}=(-1)^{\Omega(f)}(-1)^{\Omega(g)}=\lambda(f)\lambda(g).$$ 
Consider $\mathcal{L}(h)=\sum_{d\mid h} \lambda(d) $. Since $\lambda$ is multiplicative, so too is $\mathcal{L}$. We first consider the case where $h$ is a power of some monic irreducible. Then 

\begin{align*}
\mathcal{L}(h) = \sum_{d \mid P^\alpha} \lambda(d)&=\lambda(1)+\lambda(P)+\lambda(P^2)+\ldots+\lambda(P^\alpha)\\
&=1-1+1-\ldots+(-1)^\alpha\\
&= \begin{cases}0 & \text{if $\alpha$ is odd} \\ 1 & \text{if $\alpha$ is even}. \end{cases}
\end{align*}
So now if $h=P_1^{\alpha_1}\ldots P_r^{\alpha_r}$ then 

$$\mathcal{L}(h)=\mathcal{L}(P_1^{\alpha_1})\ldots\mathcal{L}(P_r^{\alpha_r})$$ so if any of the $\alpha_i$ are odd then $\mathcal{L}(h)=0$. If all $\alpha_i$ are even then $\mathcal{L}(h)=1$. Note that all $\alpha_i$ will be even if and only if $h$ is a perfect square. So we have shown that 

\begin{align*}a_f(g)=\begin{cases}
1 &\text{ if $(f,g)$ is a perfect square}\\
0 &\text{ otherwise}
\end{cases}\end{align*} So 

\begin{equation}
\sum_{\substack{\mymod{g}{f}\\(f,g)\text{ square}}} \varepsilon(hg,f) =\sum\limits_{g\mid(h,f)}|g|\lambda\left(\frac{f}{g}\right).
\end{equation}

\section{Summation of some Dirichlet series}
In this section we look at the Dirichlet series for functions of the form (\ref{defn:S}) starting with the following theorem which is the function field analogue of \cite[Theorem 5]{anderson_apostol_1}.

\begin{theorem}
Consider the polynomial zeta function $\zeta_A(s)$ given by $$\zeta_A(s)=\sum\limits_f |f|^{-s}$$ and the Dirichlet series of $H$ given by $$D_H(s)=\sum\limits_f H(f)|f|^{-s}.$$ Suppose that $D_H(s)$ is convergent for $\Re(s)>\sigma_0$.Then, for $\Re(s)>1$ we have
$$\sum_h S(h;f)|h|^{-s} = \zeta_A(s) \sum_{g \mid f}G(g)H(f/g)|g|^{-s}$$ and for $\Re(s)>\sigma_0$ $$ \sum_f S(h;f) |f|^{-s} = D_H(s) \sum_{g \mid h} G(g) \lvert g\rvert^{-s}.$$
\end{theorem}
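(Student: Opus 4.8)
The plan is to prove both identities by the same elementary device: in each of them one of the two summation variables is constrained to divide a \emph{fixed} polynomial, so the relevant double sum splits as a finite sum over divisors times a single Dirichlet series, and the interchange of summations is harmless.

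For the first identity, fix the monic polynomial $f$ and let $h$ range over all monic polynomials. Starting from $S(h;f)=\sum_{g\mid(h,f)}G(g)H(f/g)$, I would rewrite the condition $g\mid(h,f)$ as the conjunction of $g\mid f$ and $g\mid h$, so that $\sum_h S(h;f)|h|^{-s}=\sum_{g\mid f}G(g)H(f/g)\sum_{g\mid h}|h|^{-s}$, the outer sum now being \emph{finite} since it runs over the divisors of the fixed $f$. Next I would reindex the inner sum by writing $h=gh'$ with $h'$ monic; since $\deg h=\deg g+\deg h'$ and $|\cdot|=q^{\deg(\cdot)}$, this gives $\sum_{g\mid h}|h|^{-s}=|g|^{-s}\sum_{h'}|h'|^{-s}=|g|^{-s}\zeta_A(s)$. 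Substituting back yields $\sum_h S(h;f)|h|^{-s}=\zeta_A(s)\sum_{g\mid f}G(g)H(f/g)|g|^{-s}$, which is the assertion; the computation is legitimate for $\Re(s)>1$, the half-plane in which $\zeta_A(s)=\sum_f|f|^{-s}$ converges.

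For the second identity the roles are swapped: fix $h$ and let $f$ range over all monic polynomials. Now it is the condition $g\mid h$ that is finite, so $\sum_f S(h;f)|f|^{-s}=\sum_{g\mid h}G(g)\sum_{g\mid f}H(f/g)|f|^{-s}$. Writing $f=gf'$ with $f'$ monic gives $H(f/g)=H(f')$ and $|f|^{-s}=|g|^{-s}|f'|^{-s}$, hence $\sum_{g\mid f}H(f/g)|f|^{-s}=|g|^{-s}\sum_{f'}H(f')|f'|^{-s}=|g|^{-s}D_H(s)$, and substituting back gives $\sum_f S(h;f)|f|^{-s}=D_H(s)\sum_{g\mid h}G(g)|g|^{-s}$, valid for $\Re(s)>\sigma_0$ since that is where $D_H(s)$ converges.

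The only point that requires care — and the nearest thing to an obstacle — is the interchange of the sum over $g$ with the sum over $h$ (respectively $f$). But in both cases the $g$-sum ranges over the divisor set of a single fixed polynomial, hence is finite; ordering the monic polynomials by degree (there being only $q^n$ of degree $n$), the partial sums over $\deg\le N$ may be rearranged termwise, and on passing to the limit we need only that each of the finitely many inner series converges, which is precisely the content of the hypothesis $\Re(s)>1$ in the first case and $\Re(s)>\sigma_0$ in the second. No multiplicativity assumptions on $G$ or $H$ enter, since everything reduces to this formal rearrangement together with the multiplicativity of the norm, $|gh'|=|g|\,|h'|$.
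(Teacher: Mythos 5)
Your proof is correct and follows essentially the same route as the paper: split the condition $g\mid(h,f)$ into $g\mid f$ and $g\mid h$, reindex via $h=gh'$ (respectively $f=gf'$), factor out $|g|^{-s}$ using $|gh'|=|g||h'|$, and observe that the remaining divisor sum is finite so convergence reduces to that of $\zeta_A(s)$ (respectively $D_H(s)$). Your explicit justification of the interchange of summation via degree-truncated partial sums is a small point of added care beyond what the paper writes, but the argument is the same.
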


\begin{proof}
Recall that $S(h;f)=\sum_{g \mid (h,f)} G(g)H(f/g)$. Multiplying by $|h|^{-s}$, writing $h=h'g$ and summing over $h$ gives

\begin{align*}
\sum_h S(h;f)|h|^{-s} &= \sum_{h'}\sum_{g\mid f}G(g)H(f/g)|h'g|^{-s} \\
&= \sum_{h'}|h'|^{-s} \sum_{g \mid f} G(g)H(f/g) |g|^{-s} \\
&= \zeta_A(s) \sum_{g \mid f} G(g) H(f/g)|g|^{-s}.
\end{align*}
Again starting from $S(h;f)=\sum_{g \mid (h,f)} G(g)H(f/g)$ but now multiplying by $|f|^{-s}$, writing $f=f'g$ and summing over $f$ gives

\begin{align*}
\sum_f S(h;f)|f|^{-s} &= \sum_{f'} \sum_{g \mid h} G(g)H(f')|f'g|^{-s} \\
&= \sum_{f'}|f'|^{-s}H(f') \sum_{g \mid h} G(g)|g|^{-s} \\
&= D_H(s) \sum _{g \mid h} G(g)|g|^{-s}. 
\end{align*}
So now we need only prove the convergence criteria. Note that 
$$\sum_{g \mid f} G(g) H(f/g)|g|^{-s} \qquad \text{and} \qquad \sum _{g \mid h} G(g)|g|^{-s}$$ are both finite sums, so we can disregard them. The zeta function is convergent for $\Re(s)>1$ and $D_H(s)$ is convergent for $\Re(s)>\sigma_0$ by hypothesis.
\end{proof}

\vspace{1.0cm}
	
\noindent \textbf{Acknowledgment:} The first author is grateful to the Leverhulme Trust (RPG-2017-320) for the support through the research project grant ``Moments of $L$-functions in Function Fields and Random Matrix Theory". The second author was partially supported by an Undergraduate Research Bursary from the London Mathematical Society and the University of Exeter. 


\end{document}